\theoremstyle{plain}
\newtheorem{theorem}{Theorem}[section]
\newtheorem{proposition}[theorem]{Proposition}
\newtheorem{corollary}[theorem]{Corollary}
\newtheorem{lemma}[theorem]{Lemma}
\theoremstyle{definition}
\newtheorem{example}[theorem]{Example}
\theoremstyle{remark}
\newtheorem*{remark}{Remark}
\renewcommand{\hat}{\widehat}
\newcommand{\DD}{\mathbb{D}}
\newcommand{\TT}{\mathbb{T}}
\newcommand{\cD}{\mathcal{D}}
\newcommand{\cH}{\mathcal{H}}
\begin{document}

\date{9 May 2023}

\title[Divergence of Taylor series]{On the divergence of Taylor series \\ in de Branges--Rovnyak spaces}

\author{Pierre-Olivier Paris\'e}
\address{Department of Mathematics, University of Hawaii at Manoa, Honolulu, HI 96822, USA.}
\email{parisepo@hawaii.edu}

\author{Thomas Ransford}
\address{D\'epartement de math\'ematiques et de statistique, Universit\'e Laval,
Qu\'ebec City (Qu\'ebec), G1V 0A6, Canada.}
\email{ransford@mat.ulaval.ca}

\thanks{POP supported by a grant from NSERC and FQRNT.
TR supported by grants from NSERC and the Canada Research Chairs program.}

\begin{abstract}
It is known that there exist functions in certain de Branges--Rovnyak spaces
whose Taylor series diverge in norm,
even though polynomials are dense in the space.
This is often proved by showing that the
sequence of Taylor partial sums is unbounded in norm.
In this note we show that it can even happen that the Taylor partial sums  tend to infinity in norm. 
We also establish similar results for
lower-triangular summability methods such as the Ces\`aro means.
\end{abstract}

\makeatletter
\@namedef{subjclassname@2020}{\textup{2020} Mathematics Subject Classification}
\makeatother

\subjclass[2020]{Primary 46E20, Secondary 40C05,  40J05}

\keywords{De Branges--Rovnyak spaces, summability methods,  Ces\`{a}ro means, uniform boundedness principles}

\maketitle

\section{Introduction}\label{S:intro}

The Hilbert function spaces now known as de Branges--Rovnyak spaces
were first introduced by de Branges and Rovnyak in the appendix of \cite{dBR66}, in relation to scattering models. Later on, thanks largely to Sarason's work culminating in \cite{Sa94}, these spaces turned out to have deep connections with operator theory. 
For a recent account of the theory of de Branges--Rovnyak spaces,
see the two-volume monograph \cite{FM16a, FM16b}.

Let $H^\infty$ denote the space of bounded holomorphic functions on the open unit disk $\DD$.
Given $b$ in the unit ball of $H^\infty$,
the \emph{de Branges--Rovnyak space} $\cH (b)$
is the (unique) reproducing kernel Hilbert space on $\DD$
with kernel $k^b(z, w) = (1 - \overline{b(w)} b(z))/(1 - \overline{w} z)$, for $w, z \in \DD$.
It is a Hilbert space  of holomorphic functions on $\DD$, contractively contained in the Hardy space $H^2$.

The properties enjoyed by $\cH(b)$ depend strongly on whether 
$b$ is or is not an extreme point of the unit ball of $H^\infty$.
In particular, $\cH(b)$ contains the polynomials if and only if $b$ is non-extreme, and
in this case the polynomials are dense in $\cH(b)$ (see \cite[IV-3]{Sa97}).
However, even if the polynomials are dense in $\cH(b)$,
the Taylor series of  functions in $\cH(b)$ may diverge in norm.
In this note, we examine this phenomenon in detail.

It was shown in \cite[Theorem~5.5]{CGR10} that there exists a 
non-extreme function $b$ in the unit ball of $H^\infty$
and a function $f\in\cH(b)$ whose radial dilates satisfy
\begin{equation}\label{E:limsupfr}
\limsup_{r\to1^-}\|f_r\|_{\cH(b)}=\infty .
\end{equation}
Somewhat later, in \cite[Theorem~3.1]{EFKMR16}, 
an example was given
of a non-extreme  $b$ and an $f\in\cH(b)$
with the stronger property that
\begin{equation}\label{E:liminffr}
\liminf_{r\to1^-}\|f_r\|_{\cH(b)}=\infty.
\end{equation}
Another construction of such an example can be found 
in \cite[Theorem~1.1]{MR18}.

As was pointed out in \cite[Corollary~3.4]{EFKMR16}, the relation
\eqref{E:limsupfr} implies that
\begin{equation}\label{E:limsupsn}
\limsup_{n\to\infty}\|s_n(f)\|_{\cH(b)}=\infty
\quad\text{and}\quad
\limsup_{n\to\infty}\|\sigma_n(f)\|_{\cH(b)}=\infty,
\end{equation}
where $s_n(f)$ is the sequence of Taylor partial sums of $f$
and $\sigma_n(f)$ is the sequence of their Ces\`aro means.
However, the corresponding stronger form of \eqref{E:limsupsn}, namely
\begin{equation}\label{E:liminfsn}
\liminf_{n\to\infty}\|s_n(f)\|_{\cH(b)}=\infty
\quad\text{and}\quad
\liminf_{n\to\infty}\|\sigma_n(f)\|_{\cH(b)}=\infty,
\end{equation}
does not immediately follow from \eqref{E:liminffr}.
It raises the question as to whether there exist examples
where \eqref{E:liminfsn} holds. 
Our aim in this note is to
present an affirmative answer to this question. 
This is not a trivial endeavour since,
as we shall see, there exist de Branges--Rovnyak spaces $\cH(b)$
in which $\limsup_{n\to\infty}\|s_n(f)\|_{\cH(b)}=\infty$ for  some
$f$, yet $\liminf_{n\to\infty}\|s_n(f)\|_{\cH(b)}<\infty$
for all $f$.


\section{Statement of main result}

We shall  treat the case of a general lower-triangular
summability method.
Let $\Gamma=(\gamma_{nk})_{n,k\ge0}$ be an infinite
lower-triangular matrix of complex numbers. Given a formal
power series $f(z)=\sum_{k\ge0}\hat{f}(k)z^k$, for each $n\ge0$ we define
\[
S^{\Gamma}_n(f)(z):=\sum_{k=0}^n \gamma_{nk}\hat{f}(k)z^k.
\]

The following theorem is our main result.

\begin{theorem}\label{T:main}
Let $b$ be a non-extreme point of the unit ball of $H^\infty$,
and let $\Gamma=(\gamma_{nk})_{n,k\ge0}$ be an infinite
lower-triangular matrix of complex numbers.
\begin{enumerate}[\normalfont(i)]
\item If $\sup_{n\ge0}(|\gamma_{nn}|\|z^n\|_{\cH(b)})=\infty$, then there exists $f\in\cH(b)$ such that
\[
\limsup_{n\to\infty}\|S_n^\Gamma(f)\|_{\cH(b)}=\infty.
\]
\item If $\sum_{n\ge0}(|\gamma_{nn}|^{-2}\|z^n\|_{\cH(b)}^{-2})<\infty$,
 then there exists $f\in\cH(b)$ such that
\[
\liminf_{n\to\infty}\|S_n^\Gamma(f)\|_{\cH(b)}=\infty.
\]
\item If $\sum_{n\ge0}(|\gamma_{nn}|^{-1}\|z^n\|_{\cH(b)}^{-1})<\infty$,
 then there exist $f,g\in\cH(b)$ such that
\[
\liminf_{n\to\infty}|\langle S_n^\Gamma(f),g\rangle_{\cH(b)}|=\infty.
\]
\end{enumerate}
\end{theorem}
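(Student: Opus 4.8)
The plan is to treat the $S_n^\Gamma$ as bounded finite-rank operators on $\cH(b)$ and to exploit their action on monomials. Since the $k$-th Taylor coefficient is a bounded functional on $\cH(b)$ (indeed $|\hat f(k)|\le\|f\|_{H^2}\le\|f\|_{\cH(b)}$), each $S_n^\Gamma$ is bounded, and from $\hat{z^k}(j)=\delta_{jk}$ one reads off $S_n^\Gamma z^k=\gamma_{nk}z^k$ for $k\le n$ and $S_n^\Gamma z^k=0$ for $k>n$. In particular $S_n^\Gamma z^n=\gamma_{nn}z^n$, so that $\|S_n^\Gamma z^n\|_{\cH(b)}=|\gamma_{nn}|\,\|z^n\|_{\cH(b)}$, and the quantity $\rho_n:=|\gamma_{nn}|\,\|z^n\|_{\cH(b)}$ is the resonance governing all three parts. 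The technical heart is a two-sided estimate $\|S_n^\Gamma\|_{\cH(b)\to\cH(b)}\asymp\rho_n$. For the lower bound, testing on $z^n$ alone returns only the eigenvalue $|\gamma_{nn}|$, so instead I would test on a unit vector in the tail space $K_n:=\overline{\operatorname{span}}\{z^k:k\ge n\}$ that maximises $|\hat f(n)|$: for such an $f$ every sub-diagonal coefficient $\gamma_{nk}$ ($k<n$) is multiplied by $\hat f(k)=0$, so $S_n^\Gamma f=\gamma_{nn}\hat f(n)z^n$ is a pure multiple of $z^n$. This simultaneously neutralises the (arbitrary) sub-diagonal part of $\Gamma$ and exhibits the full factor $\|z^n\|_{\cH(b)}$; the resulting ratio is $|\gamma_{nn}|\,\|z^n\|\,\|P_{K_n}v_n\|$, where $v_n$ represents $f\mapsto\hat f(n)$, and it is here that the non-extreme structure of $\cH(b)$ enters, through a lower bound on $\|P_{K_n}v_n\|$, i.e.\ through control of the distance from $z^n$ to the closed span of the remaining monomials.

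Granting this, part (i) is immediate from the uniform boundedness principle: $\sup_n\rho_n=\infty$ forces $\sup_n\|S_n^\Gamma\|=\infty$, and Banach--Steinhaus yields $f\in\cH(b)$ with $\sup_n\|S_n^\Gamma f\|_{\cH(b)}=\infty$, hence $\limsup_n\|S_n^\Gamma f\|_{\cH(b)}=\infty$.

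Parts (ii) and (iii) cannot be obtained this way, since $\liminf=\infty$ is not a Baire-generic property, so I would construct the extremal functions explicitly as gliding-hump series over the resonant tail vectors $x_n\in K_n$ supplied by the lemma, normalised so that $\|S_n^\Gamma x_n\|\asymp\rho_n$ and enjoying the triangularity $S_m^\Gamma x_n=0$ for $m<n$. For (ii) I would take $f=\sum_n a_n x_n$ and choose $a_n$ with $\sum_n a_n^2<\infty$ (to keep $f\in\cH(b)$) and $a_n\rho_n\to\infty$; the hypothesis $\sum_n\rho_n^{-2}<\infty$ is precisely what allows both, by a standard weight-selection lemma. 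For (iii) the relevant quantity is bilinear: with $f=\sum_n a_n x_n$ and $g=\sum_n b_n y_n$ for suitable dual vectors $y_n$ (so that $\langle S_n^\Gamma x_n,y_n\rangle\asymp\rho_n$), the cost can be shared symmetrically, $a_n\sim b_n\sim\rho_n^{-1/2}\varepsilon_n^{1/2}$ with $\varepsilon_n\to\infty$ slowly, giving $a_nb_n\rho_n\to\infty$ while $\sum_n a_n^2=\sum_n b_n^2\sim\sum_n\varepsilon_n\rho_n^{-1}<\infty$. This explains why the weaker summability $\sum_n\rho_n^{-1}<\infty$ is enough when only a single inner product, rather than a full norm, is required to diverge.

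The step I expect to be the main obstacle is interference control in (ii) and (iii): at index $n$ the resonant contribution $a_n S_n^\Gamma x_n$ must not be swamped or cancelled by $S_n^\Gamma\big(\sum_{m<n}a_m x_m\big)$, since these earlier blocks are fed through the uncontrolled sub-diagonal entries of $\Gamma$ (the later blocks, $m>n$, are harmless because $S_n^\Gamma x_m=0$). The device that makes the diagonal term visible for a \emph{fixed} $f$ is to pair $S_n^\Gamma f$ with a functional supported on the top frequency: pairing with $v_n$ gives the exact identity $\langle S_n^\Gamma f,v_n\rangle=\gamma_{nn}\hat f(n)$, annihilating all sub-diagonal terms, and more efficiently pairing with the innovation $\delta_n:=z^n-\operatorname{proj}_{\operatorname{span}\{z^0,\dots,z^{n-1}\}}z^n$ gives $\langle S_n^\Gamma f,\delta_n\rangle=\gamma_{nn}\hat f(n)\|\delta_n\|^2$, whence the sub-diagonal-proof bound $\|S_n^\Gamma f\|_{\cH(b)}\ge|\gamma_{nn}|\,|\hat f(n)|\,\|\delta_n\|$. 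Reconciling this clean but lossy estimate with the full resonance $\rho_n$—equivalently, understanding the geometry of the monomial system $(z^n)$ in $\cH(b)$ well enough to compare $\|\delta_n\|$ and the tail distances with $\|z^n\|$—together with making the blocks lacunary enough for the $x_n$ to be almost orthogonal, is where the genuine work of the proof resides.
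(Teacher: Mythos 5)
Your reduction of all three parts to a lower bound for $\|S_n^\Gamma\|$ via test vectors in the tail space $K_n$ is the right skeleton, but there is a genuine hole exactly where you place ``the non-extreme structure of $\cH(b)$'': you never establish the uniform lower bound $\|P_{K_n}v_n\|\ge c>0$, and without it even part (i) is unproved, since $\sup_n\rho_n=\infty$ forces $\sup_n\|S_n^\Gamma\|=\infty$ only once $\|S_n^\Gamma\|\gtrsim\rho_n$ is known. The paper closes this gap with a one-line device you are missing (Lemma~\ref{L:operatornorm}): writing $(b,a)$ for the Pythagorean pair, Sarason's lemma gives $\|ah\|_{\cH(b)}\le\|h\|_{H^2}$ for $h\in H^2$, so $g_n:=z^na$ lies in your $K_n$, satisfies $\|g_n\|_{\cH(b)}\le\|z^n\|_{H^2}=1$, and has $n$-th coefficient $a(0)>0$; hence $S_n^\Gamma g_n=\gamma_{nn}a(0)z^n$ and $\|S_n^\Gamma\|\ge a(0)\rho_n$. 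This is precisely the near-maximizer of $|\hat f(n)|$ on the unit ball of $K_n$ whose existence you postulate but do not construct, and it is the only place non-extremality is used. Incidentally, your claimed two-sided estimate $\|S_n^\Gamma\|\asymp\rho_n$ is false in general (take $\gamma_{n0}$ enormous: testing on $f\equiv1$ gives $\|S_n^\Gamma\|\ge|\gamma_{n0}|$, unrelated to $\rho_n$); only the lower bound holds, and only it is needed.

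For parts (ii) and (iii) you discard the abstract route on the grounds that $\liminf=\infty$ is not a Baire-generic property; that is true but beside the point, because the relevant strengthenings of the uniform boundedness principle are not proved by category. The paper simply invokes the theorems of M\"uller and Vr\v{s}ovsk\'y (Theorem~\ref{T:ubp}\,(ii),(iii), derived from Ball's solution of the complex plank problem): if $\sum_n\|T_n\|^{-2}<\infty$ there exists $x$ with $\liminf_n\|T_nx\|=\infty$, and if $\sum_n\|T_n\|^{-1}<\infty$ there exist $x,y$ with $\liminf_n|\langle T_nx,y\rangle|=\infty$; combined with $\|S_n^\Gamma\|\ge a(0)\rho_n$, these yield (ii) and (iii) at once. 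Your substitute gliding-hump construction is left as a sketch, and its central difficulty --- interference from the uncontrolled sub-diagonal entries acting on the earlier blocks --- you acknowledge but do not resolve. Worse, the interference-proof device you propose cannot be repaired into a proof: pairing with the innovation $\delta_n$ gives only $\|S_n^\Gamma f\|_{\cH(b)}\ge|\gamma_{nn}|\,|\hat f(n)|\,\|\delta_n\|_{\cH(b)}$, and since the coefficient functionals are contractions on $\cH(b)$ (contractive containment in $H^2$), $|\hat f(n)|\le\|f\|_{\cH(b)}$, so this route requires $|\gamma_{nn}|\,\|\delta_n\|_{\cH(b)}\to\infty$, whereas the hypothesis of (ii) constrains $|\gamma_{nn}|\,\|z^n\|_{\cH(b)}$; these differ by unbounded factors. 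Concretely, in the local Dirichlet space $\cD_\zeta=\cH(b)$ of Example~\ref{X:localD} one computes $\|z^n-\zeta z^{n-1}\|_{\cD_\zeta}^2=3$, so $\|\delta_n\|\le\sqrt3$ while $\|z^n\|\asymp n^{1/2}$. Thus the loss you flag as ``where the genuine work resides'' is real and fatal to your mechanism, and some genuinely different input --- in the paper, the plank-type uniform boundedness principles --- is required.
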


An important special case is obtained by taking 
$\gamma_{nk}=\binom{n}{k}/\binom{n+\alpha}{k}$ for $0\le k\le n$,
where $\alpha\ge0$.
Then $S^\Gamma_n(f)$ is just the generalized
Ces\`aro mean $\sigma_n^\alpha(f)$. 
In particular  $S^\Gamma_n(f)=s_n(f)$ if $\alpha=0$
and $S^\Gamma_n(f)=\sigma_n(f)$ if $\alpha=1$.
Since 
\[
|\gamma_{nn}|=\binom{n+\alpha}{n}^{-1}\asymp n^{-\alpha}
\quad(n\to\infty),
\]
we immediately obtain the following corollary.

\begin{corollary}\label{C:main}
Let $b$ be a non-extreme point of the unit ball of $H^\infty$,
and let $\alpha\ge0$.
\begin{enumerate}[\normalfont(i)]
\item If $\sup_{n\ge0}(n^{-\alpha}\|z^n\|_{\cH(b)})=\infty$, then there exists $f\in\cH(b)$ such that
\[
\limsup_{n\to\infty}\|\sigma_n^\alpha(f)\|_{\cH(b)}=\infty.
\]
\item If $\sum_{n\ge0} (n^{2\alpha}/\|z^n\|_{\cH(b)}^{2})<\infty$,
 then there exists $f\in\cH(b)$ such that
\[
\liminf_{n\to\infty}\|\sigma_n^\alpha(f)\|_{\cH(b)}=\infty.
\]
\item If $\sum_{n\ge0} (n^{\alpha}/\|z^n\|_{\cH(b)})<\infty$,
 then there exist $f,g\in\cH(b)$ such that
\[
\liminf_{n\to\infty}|\langle \sigma_n^\alpha(f),g\rangle_{\cH(b)}|=\infty.
\]
\end{enumerate}
\end{corollary}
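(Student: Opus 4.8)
The three parts share a single engine, so the plan is first to record a lower bound for the operator norm of $S_n^\Gamma$ on $\cH(b)$ and then to feed it into uniform‑boundedness arguments of increasing strength. Since $b$ is non‑extreme it has an outer Pythagorean mate $a$, with $a(0)>0$ and $|a|^2+|b|^2=1$ a.e.\ on $\TT$, and the multiplier $M_a\colon H^2\to\cH(b)$, $h\mapsto ah$, is an isometry (a standard fact for non‑extreme $b$). Set $\psi_n:=z^na=M_a z^n$. Then the $\psi_n$ form an orthonormal system in $\cH(b)$, each $\psi_n$ is holomorphic with a zero of order exactly $n$ at the origin, its $n$th Taylor coefficient equals $a(0)$, and consequently
\[
S_m^\Gamma(\psi_n)=0 \ \ (m<n), \qquad S_n^\Gamma(\psi_n)=\gamma_{nn}a(0)\,z^n .
\]
In particular $\|S_n^\Gamma(\psi_n)\|_{\cH(b)}=a(0)\,|\gamma_{nn}|\,\|z^n\|_{\cH(b)}$, whence
\[
\|S_n^\Gamma\|_{\cH(b)\to\cH(b)}\ \ge\ a(0)\,|\gamma_{nn}|\,\|z^n\|_{\cH(b)} .
\]
This is the one inequality that makes the quantity $M_n:=|\gamma_{nn}|\,\|z^n\|_{\cH(b)}$ appearing in the hypotheses the genuinely relevant one; note that $S_n^\Gamma$ is a well‑defined bounded (finite‑rank) operator on $\cH(b)$ because the Taylor‑coefficient functionals are bounded on $\cH(b)\subseteq H^2$.

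For part (i) I would invoke the Banach--Steinhaus theorem directly: the hypothesis $\sup_n M_n=\infty$ forces $\sup_n\|S_n^\Gamma\|=\infty$ by the displayed bound, so the set of $f$ with $\sup_n\|S_n^\Gamma(f)\|_{\cH(b)}=\infty$ is nonempty (indeed residual), and an unbounded sequence has $\limsup=\infty$. This is the easy part and needs nothing beyond the lower bound above.

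Parts (ii) and (iii) are where the work lies, and the plan is a gliding‑hump construction using the $\psi_n$ as building blocks. For (ii) I would take $f=\sum_n c_n\psi_n$ with $\sum_n|c_n|^2<\infty$ (so that $f\in\cH(b)$, using orthonormality) but $|c_n|M_n\to\infty$; such a choice exists precisely because $\sum_n M_n^{-2}<\infty$ (take $c_n=\beta_n/M_n$ with $\beta_n\to\infty$ slowly enough that $\sum\beta_n^2/M_n^2<\infty$). Writing $S_m^\Gamma(f)=c_m\gamma_{mm}a(0)z^m+\sum_{n<m}c_n S_m^\Gamma(\psi_n)$, the first term has norm $a(0)|c_m|M_m\to\infty$, and the aim is to show it survives for every large $m$. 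For (iii) the same scheme is run bilinearly: one builds $f$ and $g$ from the $\psi_n$ with $\ell^2$ coefficients and arranges that the diagonal contribution to $\langle S_m^\Gamma(f),g\rangle$ is of size $\asymp|c_md_m|M_m$; splitting the cost of one power of $M_m$ between $f$ and $g$ is exactly what relaxes the summability requirement from $\sum M_n^{-2}<\infty$ to $\sum M_n^{-1}<\infty$. The distinction between the conclusions ($\liminf=\infty$ rather than merely $\limsup=\infty$) is the reason a concrete construction, rather than a category argument, is needed.

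The main obstacle, in both (ii) and (iii), is that the sub‑diagonal entries $\gamma_{mk}$ $(k<m)$ are completely unconstrained, so the error terms $\sum_{n<m}c_n S_m^\Gamma(\psi_n)$ are not controlled in norm and, because the monomials are far from orthogonal in $\cH(b)$, they can in principle cancel the diagonal signal $c_m\gamma_{mm}a(0)z^m$. The point is to organise the construction --- choosing the active indices along a sufficiently lacunary sequence and exploiting both the vanishing $S_m^\Gamma(\psi_n)=0$ for $n>m$ and the rapid decay of the Taylor coefficients of $a$ --- so that at every sufficiently large $m$ the diagonal term dominates (or, failing cancellation, the error only helps), giving $\liminf_m\|S_m^\Gamma(f)\|_{\cH(b)}=\infty$. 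Randomising the signs of the $c_n$ and estimating second (and fourth) moments is a natural way to defeat the potential cancellation uniformly in $m$; this is the technical heart of the proof. Finally, Corollary~\ref{C:main} is immediate from Theorem~\ref{T:main} on substituting $|\gamma_{nn}|=\binom{n+\alpha}{n}^{-1}\asymp n^{-\alpha}$.
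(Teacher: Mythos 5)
Your lower bound for the operator norm is exactly the paper's Lemma~\ref{L:operatornorm}, and your part~(i) coincides with the paper's argument (Banach--Steinhaus applied to $T_n=S_n^\Gamma$ on $\cH(b)$). One correction there: the multiplication map $h\mapsto ah$ from $H^2$ into $\cH(b)$ is a \emph{contraction}, not an isometry (Sarason's lemma gives $\|ah\|_{\cH(b)}\le\|h\|_{H^2}$), and the functions $\psi_n=z^na$ are \emph{not} orthonormal in $\cH(b)$ in general. This slip is harmless where you use it --- the norm bound needs only $\|\psi_n\|_{\cH(b)}\le1$, and $f=a\sum_n c_n z^n\in\cH(b)$ for $\ell^2$ coefficients follows from contractivity alone --- but it removes the orthogonality crutch from your constructions in (ii) and (iii).

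The genuine gap is in parts (ii) and (iii), and it sits precisely where you locate ``the technical heart of the proof'': you never defeat the uncontrolled sub-diagonal terms $\sum_{n<m}c_nS_m^\Gamma(\psi_n)$. The paper does not construct anything here; it invokes strengthened uniform boundedness principles of M\"uller and Vr\v{s}ovsk\'y \cite[Theorems~3\,(ii) and 6\,(ii)]{MV09} --- if $\sum_n\|T_n\|^{-2}<\infty$ there exists $x$ with $\liminf_n\|T_nx\|=\infty$, and if $\sum_n\|T_n\|^{-1}<\infty$ there exist $x,y$ with $\liminf_n|\langle T_nx,y\rangle|=\infty$ --- which hold for \emph{arbitrary} sequences of bounded operators, so only the diagonal lower bound $\|S_n^\Gamma\|\ge a(0)|\gamma_{nn}|\,\|z^n\|_{\cH(b)}$ is needed and the off-diagonal entries are simply irrelevant. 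Your proposed fix (lacunary supports plus random signs with second- and fourth-moment estimates) is not a routine completion: Paley--Zygmund-type moment bounds give only a \emph{constant} lower-tail probability for $\|S_m^\Gamma(f)\|$ at each $m$, which is not summable, so Borel--Cantelli cannot deliver the $\liminf$ statement over all $m$ simultaneously; rows of $\Gamma$ can be rigged so that sign cancellations kill the diagonal signal with probability bounded away from zero at infinitely many rows. What you would in effect have to prove is the general Hilbert-space statement of \cite{MV09} itself, whose known proof passes through Ball's solution of the complex plank problem \cite{Ba01} --- a genuinely nontrivial input that a gliding-hump randomization does not reproduce. As written, then, (ii) and (iii) are announced but not proved; citing the plank-theorem-based principles (or proving them) is the missing step, after which your specialization $|\gamma_{nn}|=\binom{n+\alpha}{n}^{-1}\asymp n^{-\alpha}$ correctly yields the corollary.
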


\begin{remark}
Part~(i) of Corollary~\ref{C:main} was already known.
It was established in \cite[Theorem~6.10]{GMR23}.
Our proof here is quite different,
and also leads to parts~(ii) and (iii), which we believe to be new,
even in the special cases of $s_n(f)$ and $\sigma_n(f)$.
\end{remark}


\section{Proof of main result}\label{S:proof}

The proof is based on the following uniform boundedness principles from functional analysis.

\begin{theorem}\label{T:ubp}
Let $(T_n)$ be a sequence of bounded operators on a
complex Hilbert space~$H$.
\begin{enumerate}[\normalfont(i)]
\item If $\sup_n\|T_n\|=\infty$, then there exists $x\in H$ such that
\[
\limsup_{n\to\infty}\|T_nx\|=\infty.
\]
\item If $\sum_n 1/\|T_n\|^2<\infty$, then
there exists $x\in H$ such that
\[
\liminf_{n\to\infty}\|T_nx\|=\infty.
\]
\item If $\sum_n 1/\|T_n\|<\infty$, then
there exist $x,y\in H$ such that
\[
\liminf_{n\to\infty}|\langle T_nx,y\rangle|=\infty.
\]
\end{enumerate}
\end{theorem}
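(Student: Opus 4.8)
The plan is to treat the three parts in increasing order of difficulty, reducing parts~(ii) and~(iii) to a single ``core'' existence statement about linear functionals on $H$.

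Part~(i) is just the classical Banach--Steinhaus theorem. Since $\sup_n\|T_n\|=\infty$, the family $(T_n)$ is not uniformly bounded, so by the uniform boundedness principle it cannot be pointwise bounded; hence there is an $x\in H$ with $\sup_n\|T_nx\|=\infty$, and for a sequence of non-negative reals an infinite supremum is the same as an infinite $\limsup$.

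For part~(ii) I would first pass from operators to functionals via the adjoints. For each $n$ choose a unit vector $y_n$ with $\|T_n^*y_n\|\ge\|T_n\|/2$ and set $w_n:=T_n^*y_n$, so that $\|w_n\|\asymp\|T_n\|$ and $\|T_nx\|\ge|\langle T_nx,y_n\rangle|=|\langle x,w_n\rangle|$ for every $x$. Because $\sum_n\|T_n\|^{-2}<\infty$ we also have $\sum_n\|w_n\|^{-2}<\infty$, and it therefore suffices to establish the following core lemma: if $(w_n)\subset H$ satisfies $\sum_n\|w_n\|^{-2}<\infty$, then there exists $x\in H$ with $|\langle x,w_n\rangle|\to\infty$. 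Writing $w_n=a_nu_n$ with $u_n$ a unit vector and $a_n=\|w_n\|\to\infty$, the idea is to build $x$ so that $|\langle x,u_n\rangle|\ge\lambda_n/a_n$ eventually, where $\lambda_n\to\infty$ is chosen slowly enough that $\sum_n\lambda_n^2/a_n^2<\infty$ (possible since $\sum_n a_n^{-2}$ converges); this forces $|\langle x,w_n\rangle|=a_n|\langle x,u_n\rangle|\ge\lambda_n\to\infty$.

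The construction of $x$ in the core lemma is where I expect the real work to lie. In the special case where the $u_n$ are orthonormal one simply sets $x=\sum_n(\lambda_n/a_n)u_n$, which lies in $H$ and realizes $\langle x,w_n\rangle=\lambda_n$ exactly; the obstacle in general is the lack of orthogonality, that is, controlling the cross terms $\langle x,u_m\rangle$ with $m\neq n$. I would handle this by an explicit inductive (gliding-hump) construction $x=\sum_j\xi_j$ with $\sum_j\|\xi_j\|<\infty$, choosing each $\xi_j$ to raise the relevant inner products while arranging the norms so that the tail of the series perturbs the already-secured values by a controlled amount; the combination of $a_n\to\infty$ with the convergence of $\sum_n a_n^{-2}$ is exactly what creates enough room. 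It is worth noting that a purely probabilistic choice of $x$ (a Gaussian series, say) cannot work: the set of good $x$ has measure zero for every admissible Gaussian measure, so the argument must be deterministic.

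Finally, part~(iii) reduces to the same machinery, but now the weaker hypothesis $\sum_n\|T_n\|^{-1}<\infty$ is spent in two stages, using the extra freedom of a second vector. I would first produce $y\in H$ with $\|T_n^*y\|\gtrsim\|T_n\|^{1/2}$ for large $n$: choosing unit $x_n$ with $\|T_nx_n\|\ge\|T_n\|/2$ and setting $z_n:=T_nx_n$ (so $\|z_n\|\asymp\|T_n\|$), this amounts to finding $y$ with $|\langle y,z_n\rangle|\ge c\|z_n\|^{1/2}$, which is an instance of the core construction since $\sum_n(\|z_n\|^{1/2})^{-2}=\sum_n\|z_n\|^{-1}<\infty$. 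With such a $y$ fixed, put $w_n:=T_n^*y$; then $\|w_n\|\gtrsim\|T_n\|^{1/2}$, so $\sum_n\|w_n\|^{-2}\lesssim\sum_n\|T_n\|^{-1}<\infty$, and a second application of the core lemma yields $x\in H$ with $|\langle T_nx,y\rangle|=|\langle x,w_n\rangle|\to\infty$. The bookkeeping of the exponents---a single factor of $\|T_n\|$ for~(ii) versus two factors of $\|T_n\|^{1/2}$ for~(iii)---is precisely what distinguishes the three hypotheses.
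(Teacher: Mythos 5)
Your part~(i) is fine, and your reductions of (ii) and (iii) to the ``core lemma'' are correct in outline; but the core lemma itself is precisely where the entire difficulty of the theorem sits, and your gliding-hump sketch cannot fill it. Note first what the paper actually does: part~(i) is quoted as the classical uniform boundedness principle, and parts~(ii) and~(iii) are not proved but cited from M\"uller and Vr\v{s}ovsk\'y \cite{MV09}, who derive them from Ball's solution of the \emph{complex plank problem} \cite{Ba01}. Your core lemma, in the scaled form ``if $(u_n)$ are unit vectors and $\sum_n t_n^2$ is finite (and small), then some $x$ satisfies $|\langle x,u_n\rangle|\ge t_n$ for all $n$,'' is exactly the qualitative content of Ball's complex plank theorem, a genuinely deep result. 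Crucially, it is a theorem about \emph{complex} scalars: the statement is false over $\RR$. Concretely, take pairwise orthogonal two-dimensional real subspaces $V_N$, let $u_{N,1},\dots,u_{N,N}$ be unit vectors in $V_N$ with equally spaced directions, and set $w_{N,i}:=N^{3/2}u_{N,i}$. Then $\sum_{N}\sum_{i=1}^N\|w_{N,i}\|^{-2}=\sum_N N^{-2}<\infty$, yet for any $x$, writing $x_N$ for its projection onto $V_N$, some $i$ has $|\langle x_N,u_{N,i}\rangle|\le (\pi/2N)\|x_N\|$, whence $\min_i|\langle x,w_{N,i}\rangle|\le (\pi/2)N^{1/2}\|x_N\|$; since $\sum_N\|x_N\|^2<\infty$ forces $\liminf_N N^{1/2}\|x_N\|=0$, every $x$ satisfies $\liminf_n|\langle x,w_n\rangle|=0$. (Feeding these $w_n$ into rank-one operators $T_nx=\langle x,w_n\rangle e$ shows the real analogue of part~(ii) fails as well.) Consequently any correct proof of the core lemma must exploit complex scalars in an essential way; your proposed inductive construction is field-blind as described, so the ``room'' created by $a_n\to\infty$ together with $\sum a_n^{-2}<\infty$ is demonstrably not enough. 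Your own remark that a random Gaussian choice of $x$ fails is consistent with this; Ball's proof is a delicate variational argument, not a hump construction.

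That said, your outer architecture is sound and is essentially how one deduces the operator statements from the plank theorem. For (ii), passing to $w_n=T_n^*y_n$ with $\|w_n\|\ge\|T_n\|/2$ and choosing $\lambda_n\to\infty$ with $\sum\lambda_n^2/a_n^2<\infty$ is correct. For (iii), the two-stage splitting of the budget $\|T_n\|=\|T_n\|^{1/2}\cdot\|T_n\|^{1/2}$ also works, with one caveat worth making explicit: the first stage needs the \emph{quantitative} plank statement (a lower bound $|\langle y,z_n/\|z_n\|\rangle|\ge c\|z_n\|^{-1/2}$ with $\sum\|z_n\|^{-1}<\infty$), not merely the qualitative conclusion $|\langle y,z_n\rangle|\to\infty$, since the second stage consumes the rate $\|T_n^*y\|\gtrsim\|T_n\|^{1/2}$. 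The repair is therefore simple: replace the unproven gliding-hump step by an invocation of Ball's complex plank theorem \cite{Ba01} (or cite \cite{MV09} directly, as the paper does), after which your reductions complete the proof.
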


\begin{proof}
Part~(i) is just the standard uniform bounded principle.
Parts~(ii) and (iii) are proved in \cite[Theorems~3\,(ii) and 6\,(ii)]{MV09}, where they are derived 
as a consequence of Ball's solution to the complex plank problem
\cite{Ba01}.
\end{proof}

We are going to apply these results with $T_n=S^\Gamma_n$
and $H=\cH(b)$.
Each $S^\Gamma_n$ is a bounded linear operator on
$\cH(b)$. The following
lemma provides a lower bound for its operator norm.
Once this lemma is established, Theorem~\ref{T:main} will follow immediately.

Notice that $b$ is  a non-extreme point of the unit ball of $H^\infty$
if and only if there exists an outer function $a\in H^\infty$ with $a(0)>0$
such that $|b|^2+|a|^2=1$ a.e.\ on $\TT$.
The function $a$ is then uniquely determined. 
For more details, see \cite[Chapter~IV]{Sa94}.
We call $(b,a)$ a \emph{Pythagorean pair}. 

\begin{lemma}\label{L:operatornorm}
Let $(b,a)$ be a  Pythagorean pair, 
and let $\Gamma=(\gamma_{nk})$
be a lower-triangular matrix of complex numbers. Then,
for all $n\ge0$,
\[
\|S^\Gamma_n:\cH(b)\to\cH(b)\|\ge a(0)|\gamma_{nn}|\|z^n\|_{\cH(b)}.
\]
\end{lemma}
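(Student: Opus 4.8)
\emph{The plan.} To bound the operator norm from below it suffices to exhibit a single good test function and estimate $\|S^\Gamma_n f\|_{\cH(b)}/\|f\|_{\cH(b)}$. The naive choice $f=z^n$ gives $S^\Gamma_n(z^n)=\gamma_{nn}z^n$, hence only the ratio $|\gamma_{nn}|$, because the factor $\|z^n\|_{\cH(b)}$ appears in both numerator and denominator and cancels. The point of the extra factor $a(0)\|z^n\|_{\cH(b)}$ is therefore to find an $f$ whose \emph{low-order} Taylor coefficients vanish—so that the lower triangular part of $\Gamma$ plays no role—while $\|f\|_{\cH(b)}$ stays small. Concretely, I want $f\in\cH(b)$ with $\hat f(j)=0$ for $0\le j<n$, with $\hat f(n)=1$, and with $\|f\|_{\cH(b)}\le 1/a(0)$. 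For such an $f$ the truncation collapses to $S^\Gamma_n(f)=\sum_{k=0}^n\gamma_{nk}\hat f(k)z^k=\gamma_{nn}z^n$, so that $\|S^\Gamma_n f\|_{\cH(b)}=|\gamma_{nn}|\,\|z^n\|_{\cH(b)}$, and dividing by $\|f\|_{\cH(b)}\le 1/a(0)$ yields exactly the claimed inequality. Everything thus reduces to producing this one function.

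\emph{The construction.} The natural candidate is $f:=a z^n/a(0)$. Writing $a(z)=\sum_{j\ge0}a_jz^j$ with $a_0=a(0)>0$, we have $az^n=\sum_{j\ge0}a_j z^{n+j}$, so $\hat f(j)=0$ for $j<n$ and $\hat f(n)=a_0/a(0)=1$, as required. It remains to show $f\in\cH(b)$ with $\|f\|_{\cH(b)}\le 1/a(0)$. For this I will use the space $\mathcal{M}(a):=aH^2$, normed by $\|ah\|_{\mathcal{M}(a)}:=\|h\|_{H^2}$ (well defined since $a$ is outer, hence multiplication by $a$ is injective), whose reproducing kernel is $a(z)\overline{a(w)}/(1-\overline w z)$. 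The key structural fact is that $\mathcal{M}(a)$ is \emph{contractively contained} in $\cH(b)$, i.e.\ $\|ah\|_{\cH(b)}\le\|h\|_{H^2}$ for every $h\in H^2$; see Sarason \cite{Sa94}. Applying this with $h=z^n/a(0)$ gives $f=ah\in\cH(b)$ and $\|f\|_{\cH(b)}\le\|h\|_{H^2}=1/a(0)$, completing the argument.

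\emph{The main obstacle.} The one nontrivial ingredient is the contractive containment $\mathcal{M}(a)\subseteq\cH(b)$. By Aronszajn's theory of reproducing kernels this is equivalent to positive semidefiniteness of the kernel difference
\[
k^b(z,w)-\frac{a(z)\overline{a(w)}}{1-\overline w z}=\frac{1-\overline{b(w)}b(z)-\overline{a(w)}a(z)}{1-\overline w z}.
\]
Positivity is exactly the statement that $(b,a)$ is a contractive (row) Schur function, and it rests on the Pythagorean relation: since $|b|^2+|a|^2=1$ a.e.\ on $\TT$ and $|b|^2+|a|^2$ is subharmonic, the mean value inequality gives $|b(z)|^2+|a(z)|^2\le1$ throughout $\DD$, which controls the diagonal and, together with the Szeg\H{o} factor, yields positivity of the whole kernel. (Equivalently, the containment follows from Sarason's norm identity $\|F\|_{\cH(b)}^2=\|F\|_{H^2}^2+\|F^+\|_{H^2}^2$ for $F\in\cH(b)$.) This is precisely where the Pythagorean mate $a$, and in particular the value $a(0)$, enter the estimate; once this containment is in hand, the remaining steps are the elementary coefficient computation and the single-vector norm estimate described above.
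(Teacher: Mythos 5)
Your proof is correct and takes essentially the same route as the paper: your test function $f=az^n/a(0)$ is, up to normalization, exactly the paper's $g_n(z)=z^na(z)$, and the key ingredient---the contractive containment $\|ah\|_{\cH(b)}\le\|h\|_{H^2}$ for $h\in H^2$---is precisely the fact the paper invokes (citing \cite{Sa86}, Lemma~4), after which the coefficient computation and the single-vector estimate coincide with the paper's. Your closing sketch of the containment via positivity of the kernel difference is a sound but optional supplement, since the paper treats this as a known cited result.
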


\begin{proof}
It is  known that, if $h\in H^2$, then $ah\in \cH(b)$ and $\|ah\|_{\cH(b)}\le \|h\|_{H^2}$
(see \cite[Lemma~4]{Sa86}). In particular, setting $g_n(z):=z^na(z)$, we have that $g_n\in\cH(b)$ and
$\|g_n\|_{\cH(b)}\le1$.
Also $g_n(z)=a(0)z^n+(\text{terms of higher degree})$, so $S^\Gamma_n(g_n)=\gamma_{nn}a(0)z^n$.
Combining these observations, we deduce that
\[
\|S^\Gamma_n:\cH(b)\to\cH(b)\|
\ge \frac{\|S^\Gamma_n(g_n)\|_{\cH(b)}}{\|g_n\|_{\cH(b)}}
\ge \frac{|\gamma_{nn}a(0)|\|z^n\|_{\cH(b)}}{1}.
\]
This proves the lemma.
\end{proof}

\section{Examples}\label{S:examples}

In order to exploit the results of the preceding section,
we need to be able to calculate, or at least estimate, the norms of monomials
$\|z^n\|_{\cH(b)}$. There is a well-known formula
that enables us to do this.
The following result was established in \cite[p.81]{Sa86}.
 
\begin{proposition}\label{P:formula}
Let $(b,a)$ be a Pythagorean pair and let $\phi:=b/a$,
say $\phi(z)=\sum_{j=0}^\infty c_jz^j$. Then
\[
\|z^n\|_{\cH(b)}^2=1+\sum_{j=0}^n|c_j|^2 \quad(n\ge0).
\]
\end{proposition}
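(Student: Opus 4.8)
The plan is to use Sarason's description of $\cH(b)$ for non-extreme $b$ in terms of the Pythagorean mate $a$. Writing $T_u$ for the Toeplitz operator on $H^2$ with symbol $u$, a function $f\in H^2$ lies in $\cH(b)$ precisely when $T_{\bar b}f$ belongs to the range of $T_{\bar a}$; since $a$ is outer, $T_{\bar a}$ is injective, so there is a unique $f^+\in H^2$ with $T_{\bar a}f^+=T_{\bar b}f$, and then
\[
\|f\|_{\cH(b)}^2=\|f\|_{H^2}^2+\|f^+\|_{H^2}^2.
\]
I would take this as the starting point (it is exactly the content of the result of Sarason cited above). Applying it to $f=z^n$ reduces the proposition to identifying $f^+=:g_n$ and computing $\|g_n\|_{H^2}^2$, since $\|z^n\|_{H^2}^2=1$ and $z^n\in\cH(b)$ guarantees that $g_n$ exists.

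The second step is to guess and verify the formula for $g_n$. Since $b=a\phi$ on $\DD$, one has $\bar b=\bar a\,\bar\phi$ on $\TT$, and because $\bar a$ is co-analytic the Toeplitz symbols multiply, giving $T_{\bar b}=T_{\bar a}T_{\bar\phi}$ on polynomials. This suggests $g_n=T_{\bar\phi}z^n=P_+(\bar\phi z^n)$, where $P_+$ is the Riesz projection onto $H^2$; formally retaining the nonnegative frequencies yields the explicit polynomial
\[
g_n=\sum_{j=0}^n\overline{c_j}\,z^{n-j}.
\]
Rather than justify operations with $\bar\phi$, I would simply take this polynomial as an ansatz and verify directly that $T_{\bar a}g_n=T_{\bar b}z^n$.

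The verification is a finite computation with Fourier coefficients. Writing $a=\sum_m a_mz^m$ and $b=\sum_k b_kz^k$, I would compute the coefficient of $z^\ell$ (for $\ell\ge0$) in both $P_+(\bar a g_n)$ and $P_+(\bar b z^n)$ and check that they agree; for $z^{n-i}$ with $0\le i\le n$ each reduces to $\overline{\sum_{j=0}^i a_{i-j}c_j}$ versus $\overline{b_i}$, so the whole identity collapses to the Cauchy-product relation $b_i=\sum_{j=0}^i a_{i-j}c_j$, which is just $b=a\phi$ read off coefficientwise. Both projections vanish for $\ell>n$, so the two functions coincide. Once $g_n$ is confirmed, $\|g_n\|_{H^2}^2=\sum_{j=0}^n|c_j|^2$ and the stated formula follows at once.

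The main obstacle is not the algebra but making the argument rigorous without assuming that $\phi=b/a\in H^2$: in general $\phi$ lies only in the Smirnov class, so $\bar\phi z^n$ may fail to be in $L^2(\TT)$ and $T_{\bar\phi}$ is at best densely defined. The device that resolves this is precisely to avoid $T_{\bar\phi}$ altogether, defining $g_n$ as the explicit finite sum and verifying $T_{\bar a}g_n=T_{\bar b}z^n$ directly, an identity involving only the bounded operators $T_{\bar a},T_{\bar b}$ and the first $n+1$ Taylor coefficients of $\phi$. The operator factorization $T_{\bar b}=T_{\bar a}T_{\bar\phi}$ then plays a purely heuristic role, used only to locate the right candidate for $g_n$.
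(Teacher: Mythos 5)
Your proof is correct. Note that the paper itself offers no argument for this proposition: it simply cites Sarason \cite[p.~81]{Sa86}, so there is no in-paper proof to match. Your reconstruction follows the standard route, which is essentially the one in the cited source: start from Sarason's characterization of $\cH(b)$ for non-extreme $b$, namely $f\in\cH(b)$ iff $T_{\bar b}f\in T_{\bar a}H^2$, with $\|f\|_{\cH(b)}^2=\|f\|_{H^2}^2+\|f^+\|_{H^2}^2$ where $T_{\bar a}f^+=T_{\bar b}f$ (uniqueness holding because $a$ is outer, so $T_{\bar a}$ is injective), and then identify $(z^n)^+=\sum_{j=0}^n\overline{c_j}\,z^{n-j}$. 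Your coefficientwise verification is sound: the $z^{n-i}$ coefficient of $P_+(\bar a g_n)$ is $\overline{\sum_{j=0}^i a_{i-j}c_j}=\overline{b_i}$, matching that of $P_+(\bar b z^n)$, with both projections vanishing above degree $n$ and below degree $0$, so the identity reduces exactly to the Cauchy product $b=a\phi$, which holds as an identity of holomorphic functions on $\DD$ regardless of whether $\phi\in H^2$. Two points deserve credit: first, your verification simultaneously establishes $z^n\in\cH(b)$ (since it exhibits $T_{\bar b}z^n$ in the range of $T_{\bar a}$), so the phrase ``$z^n\in\cH(b)$ guarantees that $g_n$ exists'' is dispensable rather than circular; second, you correctly diagnose and avoid the genuine pitfall, namely that $\phi$ lies only in the Smirnov class $N^+$, so $T_{\bar\phi}$ need not be a bounded (or even everywhere-defined) operator, and the factorization $T_{\bar b}=T_{\bar a}T_{\bar\phi}$ is safely demoted to a heuristic for locating the ansatz. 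No gaps.
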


Clearly, if $(b,a)$ is a Pythagorean pair, then $\phi:=b/a\in N^+$,
the Smirnov class. Conversely, given $\phi\in N^+$,
then there exists  a unique Pythagorean pair $(b,a)$ 
such that $\phi=b/a$ (see \cite[Proposition~3.1]{Sa08}). 
Thus we can identify $\cH(b)$ by specifying the corresponding function $\phi\in N^+$, and this is what we shall do in the examples below.
 
\begin{example}\label{X:localD}
Let  $\phi(z):=z/(\zeta-z)$, where $\zeta\in\TT$.

The Taylor coefficients $c_j$ of $\phi$ satisfy
$c_0=0$ and $|c_j|=1$ for $j\ge1$, so by Proposition~\ref{P:formula}
we have $\|z^n\|_{\cH(b)}= \sqrt{n+1}$.
Corollary~\ref{C:main} implies that there exists
$f\in\cH(b)$ such that $\limsup_{n\to\infty}\|s_n(f)\|_{\cH(b)}=\infty$.
However, this result does not permit us to deduce the existence of an $f\in\cH(b)$ with $\liminf_{n\to\infty}\|s_n(f)\|_{\cH(b)}=\infty$
or $\limsup_{n\to\infty}\|\sigma_n(f)\|_{\cH(b)}=\infty$.
In fact, we claim that no $f\in\cH(b)$ satisfies either of these
conclusions,
indicating the Corollary~\ref{C:main} is rather sharp, at least for this function~$\phi$.

To justify the claim, we note that,
for this particular $\phi$, 
it is known that $b(z)=(1-\tau) z/(\zeta-\tau z)$,
where $\tau=(3-\sqrt{5})/2$, and that $\cH(b)$ is equal
to the local Dirichlet space $\cD_\zeta$, with equality of norms
(see \cite[Proposition~2]{Sa97}). 
It was proved in \cite[Theorem~2.4]{MSW22} that
\begin{equation}\label{E:snfinite}
\liminf_{n\to\infty}\|s_n(f)\|_{\cD_\zeta}=\|f\|_{\cD_\zeta}
\quad(f\in\cD_\zeta).
\end{equation}
Also, it was shown in \cite[Theorem~1.6]{MR19} that 
$\|\sigma_n(f)- f\|_{\cD_\zeta}\to0$
for all $f\in\cD_\zeta$, so in particular
\begin{equation}\label{E:sigmanfinite}
\limsup_{n\to\infty}\|\sigma_n(f)\|_{\cD_\zeta}=\|f\|_{\cD_\zeta}
\quad(f\in\cD_\zeta).
\end{equation}
Since $\cD_\zeta=\cH(b)$, it follows that both
\eqref{E:snfinite} and \eqref{E:sigmanfinite} hold
with $\cD_\zeta$ replaced everywhere by $\cH(b)$, confirming the
claim made above.
\end{example}

\begin{example}
Let $\phi(z)=z^M/(1-z)^N$, where $M,N$ are integers
with $M\ge0$ and $N\ge1$.

In this case $|c_j|\asymp j^{N-1}$, so by Proposition~\ref{P:formula}
we have $\|z^n\|_{\cH(b)}\asymp n^{N-1/2}$ as $n\to\infty$.
Applying Corollary~\ref{C:main}, we deduce that, if 
$0\le \alpha<N-1$, then there exists $f\in\cH(b)$ such that
$\|\sigma_n^\alpha(f)\|_{\cH(b)}\to\infty$ as $n\to\infty$.
In particular, if $N\ge 2$, then there exists
$f\in\cH(b)$ with $\|s_n(f)\|_{\cH(b)}\to\infty$.
Likewise, if $N\ge 3$, then there exists
$f\in\cH(b)$ with $\|\sigma_n(f)\|_{\cH(b)}\to\infty$.
This answers the question posed in the introduction.
\end{example}

\begin{example}
Let $\phi(z)=\exp(\beta/(1-z)^\gamma)$,
where $\beta>0$ and $\gamma\in(0,1)$.

In this case, 
it can be shown that 
\[
c_j\asymp  \frac{\exp(Cj^{\gamma/(\gamma+1)})}{j^{(\gamma+2)/(2\gamma+2)}} \quad(j\to\infty),
\]
where $C$ is a positive constant depending on $\beta,\gamma$
(see Appendix~\ref{S:appendix} below).
By Proposition~\ref{P:formula}, it follows that
\[
\|z^n\|_{\cH(b)}^2\gtrsim
\frac{\exp(Cn^{\gamma/(\gamma+1)})}{n^{(\gamma+2)/(2\gamma+2)}} \quad(n\to\infty).
\]
By Corollary~\ref{C:main}, 
for each $\alpha\ge0$, there exist functions $f,g\in\cH(b)$
such that 
$|\langle\sigma_n^\alpha(f),g\rangle_{\cH(b)}|\to\infty$. 
In particular,  $\|\sigma_n^\alpha(f)\|_{\cH(b)}\to\infty$.

This raises the following question: does there exist
a function  $f\in\cH(b)$
such that  $\|\sigma_n^\alpha(f)\|_{\cH(b)}\to\infty$
for all $\alpha\ge0$?
\end{example}


\appendix

\section{The Taylor coefficients of $\exp(\beta/(1-z)^\gamma))$}\label{S:appendix}

Let $\phi(z):=\exp(\beta/(1-z)^\gamma)$, where $\beta>0$ and $\gamma\in(0,1)$.
This function belongs to the Smirnov class, since it is the exponential of a function in 
the Hardy space $H^1$.
To estimate its Taylor coefficients, 
we  use  the following generalization of Stirling's formula
due to Hayman \cite[p.69]{Ha56}.

\begin{theorem}\label{T:Hayman}
Let $f(z)=\sum_{n=0}^\infty c_nz^n$ be an admissible holomorphic function on $\DD$
such that $f(r)>0$ for all $r\in(0,1)$. For $r\in(0,1)$, define
\begin{align*}
M(r)&:=\sup_{|z|=r}|f(z)|,\\
A(r)&:=r(\log M(r))',\\
B(r)&:=rA'(r).
\end{align*}
Then, as $n\to\infty$,
\begin{equation}\label{E:asymp}
c_n\sim \frac{f(r_n)}{r_n^n\sqrt{2\pi B(r_n)}},
\end{equation}
where $r_n\in(0,1)$ is the unique solution to the equation $A(r_n)=n$.
\end{theorem}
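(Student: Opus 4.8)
The plan is to establish \eqref{E:asymp} by the saddle-point (Laplace) method applied to Cauchy's integral formula, which is the classical route to Hayman-type asymptotics. First I would write the Taylor coefficient as a contour integral over the circle of radius $r_n$,
\[
c_n=\frac{1}{2\pi i}\oint_{|z|=r_n}\frac{f(z)}{z^{n+1}}\,dz
=\frac{1}{2\pi r_n^n}\int_{-\pi}^{\pi}f(r_ne^{i\theta})\,e^{-in\theta}\,d\theta,
\]
so that $c_nr_n^n=\frac{1}{2\pi}\int_{-\pi}^{\pi}e^{h(\theta)}\,d\theta$, where $h(\theta):=\log f(r_ne^{i\theta})-in\theta$ and the branch of the logarithm is chosen to be real at $\theta=0$; this is legitimate because $f(r_n)>0$. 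The guiding idea is that $\theta=0$ should be a saddle point of $h$, and that the defining equation $A(r_n)=n$ is exactly the condition that makes it one.

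Next I would compute the first two derivatives of $h$ at the origin. Writing $z=r_ne^{i\theta}$ and using $dz/d\theta=iz$, together with the relation $A(r)=r(\log f(r))'=rf'(r)/f(r)$ (here $M(r)=f(r)$, as holds for the admissible functions under consideration), one finds
\[
h'(0)=iA(r_n)-in=0,\qquad h''(0)=-B(r_n),
\]
where the expression for $h''(0)$ comes from differentiating $A(r)=r(\log f(r))'$ to obtain $B(r)=r(\log f(r))'+r^2(\log f(r))''$. Hence near the origin $h(\theta)=\log f(r_n)-\tfrac12 B(r_n)\theta^2+O(\theta^3)$, which makes the Gaussian approximation
\[
c_nr_n^n\approx\frac{f(r_n)}{2\pi}\int_{-\infty}^{\infty}e^{-\frac12 B(r_n)\theta^2}\,d\theta
=\frac{f(r_n)}{\sqrt{2\pi B(r_n)}}
\]
plausible, and this is precisely \eqref{E:asymp}.

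To turn the heuristic into a proof I would split the integral into a major arc $|\theta|\le\delta_n$ and a minor arc $\delta_n<|\theta|\le\pi$, choosing $\delta_n\to0$ slowly enough that $B(r_n)\delta_n^2\to\infty$; this is possible because admissibility forces $B(r)\to\infty$ as $r\to1^-$. On the major arc I would insert the second-order expansion of $h$, bound the cubic remainder uniformly, and extend the resulting Gaussian integral to the whole line, incurring only a relative error that tends to $0$. On the minor arc I would show that $|f(r_ne^{i\theta})|$ is exponentially smaller than $f(r_n)$, so that this portion contributes negligibly compared with the main term.

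The main obstacle is this last, minor-arc estimate: controlling $|f(r_ne^{i\theta})/f(r_n)|$ away from $\theta=0$ is the delicate step, and it is exactly what Hayman's technical notion of \emph{admissibility} is engineered to deliver, simultaneously guaranteeing the validity of the Gaussian reduction on the major arc and the exponential decay on the minor arc. Since the result is classical, in practice I would verify that the function at hand meets Hayman's admissibility conditions and then quote \eqref{E:asymp} directly, rather than reproduce the full saddle-point analysis.
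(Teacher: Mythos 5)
Your proposal is correct as far as it goes, and it ends exactly where the paper does: the paper gives no proof of this theorem at all, quoting it directly from Hayman \cite[p.~69]{Ha56} (with the definition of admissibility deliberately omitted) and then checking that the specific function $\exp(\beta/(1-z)^\gamma)$ is admissible by citing \cite[p.~93]{Ha56} --- which is precisely the fallback you adopt in your final paragraph. Your saddle-point sketch is a faithful summary of Hayman's own argument --- the identities $h'(0)=i\bigl(A(r_n)-n\bigr)=0$ and $h''(0)=-B(r_n)$ are computed correctly, and you rightly identify that the two genuinely hard steps (the uniform Gaussian reduction on the major arc and the bound $|f(r_ne^{i\theta})|=o\bigl(f(r_n)/\sqrt{B(r_n)}\bigr)$ on the minor arc) are not things one proves from scratch but are exactly the content of the admissibility hypotheses --- so in substance your proposal and the paper coincide.
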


The definition of admissible function is complicated, and we do not give it here.
However, the function $\phi(z)$ defined above is admissible (see \cite[p.93]{Ha56}),
so Theorem~\ref{T:Hayman} does indeed apply to it. Here is what we deduce.

\begin{theorem}
Let $\phi(z)=\exp(\beta/(1-z)^\gamma)$, where $\beta>0$ and $\gamma\in(0,1)$.
Then $\phi(z)=\sum_{n=0}^\infty c_nz^n$, where
\begin{equation}\label{E:cn}
c_n\sim  \frac{\exp(Cn^{\gamma/(\gamma+1)})}{Dn^{(\gamma+2)/(2\gamma+2)}} \quad(n\to\infty),
\end{equation}
and where 
\begin{equation}\label{E:constants}
C=(\beta\gamma)^{1/(\gamma+1)}(1+1/\gamma)
\quad\text{and}\quad 
D= \sqrt{\frac{2\pi(\gamma+1)}{(\beta\gamma)^{1/(\gamma+1)}}}.
\end{equation}
\end{theorem}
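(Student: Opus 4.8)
The plan is to apply Hayman's theorem (Theorem~\ref{T:Hayman}) directly to the admissible function $\phi(z)=\exp(\beta/(1-z)^\gamma)$, so the entire task reduces to computing the four quantities $M(r)$, $A(r)$, $B(r)$, and $r_n$ appearing in the asymptotic formula \eqref{E:asymp}, and then substituting. Since $\phi$ is already known to be admissible and $\phi(r)>0$ for $r\in(0,1)$, the hypotheses of Hayman's theorem are satisfied, and the work is purely a matter of asymptotic bookkeeping as $r\to1^-$.

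First I would compute $M(r)=\sup_{|z|=r}|\phi(z)|=\phi(r)=\exp(\beta/(1-r)^\gamma)$, using that $|\exp(w)|=\exp(\Re w)$ is maximized on $|z|=r$ when $z=r$ (since $1/(1-z)^\gamma$ has positive real part maximized at $z=r$). Hence $\log M(r)=\beta(1-r)^{-\gamma}$. Differentiating gives $A(r)=r(\log M(r))'=r\beta\gamma(1-r)^{-\gamma-1}$, so that $A(r)\sim \beta\gamma(1-r)^{-(\gamma+1)}$ as $r\to1^-$. Next, differentiating again and multiplying by $r$ yields $B(r)=rA'(r)\sim \beta\gamma(\gamma+1)(1-r)^{-(\gamma+2)}$, where the dominant term comes from differentiating the $(1-r)^{-\gamma-1}$ factor.

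The key intermediate step is solving $A(r_n)=n$ asymptotically for $r_n$. From $A(r_n)\sim \beta\gamma(1-r_n)^{-(\gamma+1)}=n$ I would obtain
\[
1-r_n\sim (\beta\gamma/n)^{1/(\gamma+1)},
\]
which I then feed back into the expressions for $\log M(r_n)$, $r_n^{-n}$, and $B(r_n)$. The main obstacle, and the step requiring the most care, is handling the factor $r_n^{-n}=\exp(-n\log r_n)$: since $\log r_n=\log(1-(1-r_n))\approx -(1-r_n)-\tfrac12(1-r_n)^2-\cdots$, one must keep enough terms to see which contributions survive in the exponent. I would write $-n\log r_n = n(1-r_n)+\tfrac{n}{2}(1-r_n)^2+O(n(1-r_n)^3)$ and check that only the terms contributing to the leading exponential order $n^{\gamma/(\gamma+1)}$ matter, while the remaining terms are either absorbed into lower-order corrections or vanish; combining this with $\log M(r_n)=\beta(1-r_n)^{-\gamma}=\beta(n/(\beta\gamma))^{\gamma/(\gamma+1)}$ produces the exponent $Cn^{\gamma/(\gamma+1)}$.

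Finally I would assemble the pieces. The exponential terms from $M(r_n)$ and from $r_n^{-n}$ combine to give $\exp(Cn^{\gamma/(\gamma+1)})$ with $C=(\beta\gamma)^{1/(\gamma+1)}(1+1/\gamma)$, once one verifies that $\beta(n/(\beta\gamma))^{\gamma/(\gamma+1)} + n(1-r_n) = (\beta\gamma)^{1/(\gamma+1)}(1+1/\gamma)\,n^{\gamma/(\gamma+1)}(1+o(1))$. The polynomial prefactor arises from $\sqrt{2\pi B(r_n)}\sim \sqrt{2\pi \beta\gamma(\gamma+1)}\,(1-r_n)^{-(\gamma+2)/2}$, which upon substituting $1-r_n\sim(\beta\gamma/n)^{1/(\gamma+1)}$ yields a power of $n$ equal to $n^{(\gamma+2)/(2\gamma+2)}$ together with the constant $D=\sqrt{2\pi(\gamma+1)/(\beta\gamma)^{1/(\gamma+1)}}$. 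Collecting everything into \eqref{E:asymp} then gives \eqref{E:cn} with the constants \eqref{E:constants}, and I would close by remarking that any subexponential errors in $r_n$ only affect the implied multiplicative constant, so the stated asymptotic equivalence $\sim$ holds.
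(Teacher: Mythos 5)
Your overall strategy coincides with the paper's: apply Hayman's theorem to the admissible function $\phi$, compute $M(r)$, $A(r)$, $B(r)$, solve $A(r_n)=n$, and substitute into \eqref{E:asymp}; your formulas for $M$, $A$, $B$ and the leading behaviour $1-r_n\sim(\beta\gamma/n)^{1/(\gamma+1)}$ are all correct. The genuine gap is in the precision to which you determine $r_n$, and hence the exponent. The target \eqref{E:cn} asserts the asymptotic with the \emph{exact} constants $C$ and $D$, so the quantity $\log\phi(r_n)-n\log r_n$ must equal $Cn^{\gamma/(\gamma+1)}+o(1)$ with an \emph{additive} $o(1)$ error. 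What you propose to verify, namely $\beta(n/(\beta\gamma))^{\gamma/(\gamma+1)}+n(1-r_n)=Cn^{\gamma/(\gamma+1)}(1+o(1))$, is strictly weaker: a multiplicative $1+o(1)$ in the exponent leaves an uncontrolled factor $\exp\bigl(o(n^{\gamma/(\gamma+1)})\bigr)$, which can grow faster than any power of $n$ and destroys both the polynomial prefactor and the constant $D$. For the same reason your closing remark is false: subexponential errors in the exponent do not merely ``affect the implied multiplicative constant'' (they need not yield a constant at all), and even if they did, the theorem allows no such freedom, since $D$ is specified. Note also that solving only $\beta\gamma(1-r_n)^{-(\gamma+1)}\sim n$, as you do, pins down $1-r_n$ only up to a $1+o(1)$ factor, and $n(1-r_n)$ is then known only up to an additive $o(n^{\gamma/(\gamma+1)})$ error --- not enough.

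The repair, which is exactly what the paper does, is to solve $A(r_n)=n$ to second order from the exact equation rather than the leading asymptotic. Writing $s_n:=1-r_n$, the exact relation $\beta\gamma r_n=n(1-r_n)^{\gamma+1}$ gives $s_n=(\beta\gamma/n)^{1/(\gamma+1)}(1-s_n)^{1/(\gamma+1)}$, whence $s_n=(\beta\gamma/n)^{1/(\gamma+1)}\bigl(1+O(n^{-1/(\gamma+1)})\bigr)$. Feeding this refined estimate into $ns_n$ and into $\beta s_n^{-\gamma}$ produces additive errors of order $n^{(\gamma-1)/(\gamma+1)}$, and it is precisely the hypothesis $\gamma\in(0,1)$ that makes this exponent negative, hence the errors $o(1)$; the same observation disposes of your $\tfrac{n}{2}(1-r_n)^2$ term, which is also $O(n^{(\gamma-1)/(\gamma+1)})=o(1)$. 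With this in hand one obtains $\phi(r_n)\sim\exp\bigl(\beta^{1/(\gamma+1)}\gamma^{-\gamma/(\gamma+1)}n^{\gamma/(\gamma+1)}\bigr)$ and $r_n^n\sim\exp\bigl(-(\beta\gamma)^{1/(\gamma+1)}n^{\gamma/(\gamma+1)}\bigr)$ as genuine asymptotic equivalences, and the rest of your assembly --- the estimate of $B(r_n)$, the identification of $C$ and $D$, and the substitution into \eqref{E:asymp} --- goes through exactly as you describe.
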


\begin{proof}
We apply Theorem~\ref{T:Hayman}. Simple computations give
\[
M(r) =\exp\Bigl(\frac{\beta}{(1-r)^\gamma}\Bigr),
\quad
A(r) = \frac{\beta\gamma r}{(1-r)^{\gamma+1}},
\quad
B(r) =\frac{\beta\gamma r(1+\gamma r)}{(1-r)^{\gamma+2}}.
\]
Also, the relation $A(r_n)=n$ is equivalent to
$\beta\gamma r_n/n=(1-r_n)^{\gamma+1}$.
Writing $s_n:=1-r_n$, we obtain
\[
s_n=\Bigl(\frac{\beta\gamma}{n}\Bigr)^{1/(\gamma+1)}(1-s_n)^{1/(\gamma+1)}.
\]
In particular
$s_n=O(n^{-1/(\gamma+1)})$ as $n\to\infty$, and so
\[
s_n=\Bigl(\frac{\beta\gamma}{n}\Bigr)^{1/(\gamma+1)}\Bigl(1+O(n^{-1/(\gamma+1)})\Bigr).
\]

A computation gives that
\begin{align*}
\log \phi(r_n)&=\frac{\beta}{(1-r_n)^\gamma}=\frac{\beta}{s_n^\gamma}
=\beta\Bigl(\frac{n}{\beta\gamma}\Bigr)^{\gamma/(\gamma+1)}\Bigl(1+O(n^{-1/(\gamma+1)})\Bigr)\\
&=\beta\Bigl(\frac{n}{\beta\gamma}\Bigr)^{\gamma/(\gamma+1)}+o(1),
\end{align*}
so
\begin{equation}\label{E:phi(r)}
\phi(r_n)\sim\exp\Bigl(\frac{\beta^{1/(\gamma+1)}}{\gamma^{\gamma/(\gamma+1)}}n^{\gamma/(\gamma+1)}\Bigr).
\end{equation}
Also, 
\begin{align*}
\log(r_n^n)&=n\log(1-s_n)=-ns_n+O(ns_n^2)\\
&=-n\Bigl(\frac{\beta\gamma}{n}\Bigr)^{1/(\gamma+1)}\Bigl(1+O(n^{-1/(\gamma+1)})\Bigr)+O(n^{(\gamma-1)/(\gamma+1)}\Bigr)\\
&=-(\beta\gamma)^{1/(\gamma+1)}n^{\gamma/(\gamma+1)}+o(1),
\end{align*}
so
\begin{equation}\label{E:rn^n}
r_n^n\sim\exp\Bigl(-(\beta\gamma)^{1/(\gamma+1)}n^{\gamma/(\gamma+1)}\Bigr).
\end{equation}
Lastly, we have
\begin{equation}\label{E:b(rn)}
B(r_n)=\frac{\beta\gamma r_n(1+\gamma r_n)}{(1-r_n)^{\gamma+2}}
\sim\frac{\beta\gamma(1+\gamma)}{s_n^{\gamma+2}}
\sim\frac{1+\gamma}{(\beta\gamma)^{1/(\gamma+1)}}n^{(\gamma+2)/(\gamma+1)}.
\end{equation}
Feeding \eqref{E:phi(r)}, \eqref{E:rn^n} and \eqref{E:b(rn)} into \eqref{E:asymp},
we obtain \eqref{E:cn} and \eqref{E:constants}.
\end{proof}

\bibliographystyle{amsplain}
\bibliography{bibliography.bib}
\end{document}